\documentclass[12]{amsart}

\usepackage{amscd,amsxtra,amsthm}
\usepackage[all]{xy}
\usepackage{etex}
\usepackage{pictex}
\usepackage{graphicx}
\usepackage{mathtools}

\usepackage{comment}

\newtheorem{theorem}{Theorem}


\theoremstyle{definition}

\theoremstyle{corollary}
\newtheorem{corollary}[theorem]{Corollary}

\theoremstyle{conjecture}

\theoremstyle{remark}

\theoremstyle{proposition}

\usepackage[english]{babel}
\usepackage[utf8]{inputenc}
\usepackage{amsmath}
\usepackage{graphicx}
\usepackage[colorinlistoftodos]{todonotes}

\subjclass[2010]{Primary 15B36, 20H30; Secondary 05E15}
\keywords{Pascal Matrix, cyclic group orders, eigenvectors, binomial coefficients, finite fields }

\title{A short note on the order of the Zhang-Liu matrices over arbitrary fields}

\author[L. Betthauser]{Leo Betthauser}
\address{Department of Mathematics \\
PO Box 118105 \\
University of Florida \\
Gainesville, FL 32611-8105}
\email{lbetthauser@ufl.edu}

\author[J. Hiller]{Josh Hiller}
\address{Department of Mathematics \\
PO Box 118105 \\
University of Florida \\
Gainesville, FL 32611-8105}
\email{jphiller1@ufl.edu}

\date{\today}

\begin{document}
\maketitle

\begin{abstract}
We give necessary and sufficient conditions for the Zhang-Liu matrices to be diagonalizable over arbitrary fields and provide the eigen-decomposition when it is possible. We use this result to calculate the order of these matrices over any arbitrary field. This generalizes a result of the second author.
\end{abstract}

\section{Introduction}
 
The various generalizations of the pascal matrices have been an active subject of research since at least the early 1990's (the interested reader can see \cite{CandV},  \cite{Z}, \cite{ZandW} for background on some of the more well studied variations). The study of the order of these matrices over finite rings, however, is much more recent and can be traced to Deveci and Karaduman \cite{OandD}. In their article,  an explicit function which calculates the order of the generalized Pascal matrix of the first kind over the ring $\mathbb{Z}/n\mathbb{Z}$ was given. 

These matrices are defined in the following way:

$$ P_1(y)= (p)_{ij}  =\left\{ \begin{array}{cl} y^{j-i}\left(\begin{array}{c} j-1 \\ i-1\end{array}\right) & \mbox{ if } j \geq i\\ & \\  0 & \mbox{ otherwise.} \end{array} \right.$$

Where $y\in \mathbb{Z}/n\mathbb{Z}.$ However, analogous results for the symmetric Pascal matrices and the generalized Pascal matrices of the second kind over finite fields have remained elusive (despite the partial results in \cite{ZL}, \cite{OandD}, \cite{Hiller} and \cite{L}).  The Pascal matrices of the second kind are the  square matrices defined as follows: 

$$ P_2(x)= (p_2)_{ij}  =\left\{ \begin{array}{cl} x^{j+i-2}\left(\begin{array}{c} j-1 \\ i-1\end{array}\right) & \mbox{ if } j \geq i\\ & \\  0 & \mbox{ otherwise.} \end{array} \right.$$

where $x \in \mathbb{F}^{\times}$.

One type of matrix which has yet to be examined in this context are the Zhang-Liu matrices. These were introduced in 1998 in \cite{ZL}. The utility of these matrices is that, in essence, they ``give" us both generalizations of the Pascal matrix. They are defined as:

$$ Q(y,x)= (\rho)_{ij}  =\left\{ \begin{array}{cl} (y^{j-i})(x^{j+i-2})\left(\begin{array}{c} j-1 \\ i-1\end{array}\right) & \mbox{ if } j \geq i\\ & \\  0 & \mbox{ otherwise.} \end{array} \right.$$

\noindent where $x$ is in the multiplicative group of a field, and $y$ is any element of the field. 

In this note, we give necessary and sufficient conditions for $Q(y,x)$ to be diagonalizable over a given field. We also determine the exact order of $Q(y,x)$ as a function of $y$ and $x$.

Throughout this note, $p$ will denote a prime,  we will assume that $y\in \mathbb{F}$ is a field with multiplicative group $\mathbb{F}^{\times}$ and that $x\in \mathbb{F}^{\times}.$ We will also assume that all matrices are square of dimension $n\geq 2$ and we denote $|a|$ as  the multiplicative order of an element $a$ of a field. Also,  we operate under the convention of \cite{Z} and \cite{CandV} that $0^0=1.$ 

\section{Results}

The desired theorems will stem from the following factorization.

 \begin{theorem} If $x \not\in\{1,-1\}$and $z=\frac{yx}{x^2-1}$ then the following equation holds: 

$$Q(y,x)=P_1(z)D(x^2)P_1(-z),$$

\noindent where $D(\alpha)$ is the diagonal matrix consisting of nonzero entries $(d)_{i,i}=(\alpha)^{i-1}$ where $\alpha\in \mathbb{F}$.
\end{theorem}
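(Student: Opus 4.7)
The plan is a direct entry-by-entry computation of the product $P_1(z)D(x^2)P_1(-z)$, followed by substitution of the value of $z$. Since all three factors are lower-triangular (or diagonal), the $(i,j)$-entry of the product is a sum over a single index $k$ ranging from $i$ to $j$, which keeps the bookkeeping manageable.

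First, I would write
\[
\bigl[P_1(z)D(x^2)P_1(-z)\bigr]_{ij} \;=\; \sum_{k=i}^{j} z^{k-i}\binom{k-1}{i-1}(x^2)^{k-1}(-z)^{j-k}\binom{j-1}{k-1}.
\]
Then I would collect the powers of $z$ using $z^{k-i}(-z)^{j-k} = (-1)^{j-k}z^{j-i}$, pulling the factor $z^{j-i}$ outside the sum so that only signs, powers of $x^2$, and binomial coefficients remain inside.

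The key structural step is the standard binomial identity
\[
\binom{k-1}{i-1}\binom{j-1}{k-1} \;=\; \binom{j-1}{i-1}\binom{j-i}{k-i},
\]
which factors the $i$- and $j$-dependent piece $\binom{j-1}{i-1}$ out front and leaves a sum that depends on $k$ only through $k-i$. Reindexing by $m=k-i$ and pulling out $(x^2)^{i-1}$, I would get
\[
\binom{j-1}{i-1}(x^2)^{i-1}z^{j-i}\sum_{m=0}^{j-i}\binom{j-i}{m}(-1)^{j-i-m}(x^2)^m,
\]
and the inner sum collapses by the binomial theorem to $(x^2-1)^{j-i}$. Here is where the hypothesis $x\notin\{1,-1\}$ is essential: it guarantees both that $z=yx/(x^2-1)$ is well-defined and that the factor $(x^2-1)^{j-i}$ can cancel the denominators in $z^{j-i}$.

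Finally, substituting $z=yx/(x^2-1)$ yields $z^{j-i}(x^2-1)^{j-i} = y^{j-i}x^{j-i}$, and combining with $(x^2)^{i-1}=x^{2i-2}$ produces the claimed entry $y^{j-i}x^{j+i-2}\binom{j-1}{i-1}$, which matches $Q(y,x)_{ij}$ for $j\geq i$; the vanishing for $j<i$ is automatic from lower-triangularity. I do not expect any real obstacle here: the only subtle point is recognizing and applying the $\binom{k-1}{i-1}\binom{j-1}{k-1}$ identity to decouple the two binomial coefficients, after which the binomial theorem finishes the computation.
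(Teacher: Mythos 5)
Your proposal is correct and follows essentially the same route as the paper's proof: the same entry-by-entry expansion of the triple product, the same decoupling identity $\binom{k-1}{i-1}\binom{j-1}{k-1}=\binom{j-1}{i-1}\binom{j-i}{k-i}$, and the same collapse of the inner sum to $(x^2-1)^{j-i}$ via the binomial theorem before cancelling the denominator of $z^{j-i}$. (One cosmetic slip: with the paper's indexing these matrices are upper-triangular rather than lower-triangular, though your sum over $i\le k\le j$ is exactly the right one.)
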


\begin{proof}

Given that $x\not\in \{1,-1\},$ then $x^2-1 \not= 0$ and so $\frac{yx}{x^2-1}$ exists. Since all the matrices in the product are upper triangular, we will concern ourselves only with the case when  $j \geq i$. We denote the $i,j$ entry of $P_1(z)$ by $p_{i,j}$ and the $i,j$ entry of $P_1(-z)$ by $p'_{i,j}.$ Then, letting $b_{i,j}$ be the $i,j$ entry of the matrix $P(z)D(x^2)P(-z)$ we see that by defining   

$$a_{i,j}:=\sum_{k=1}^{n}{d_{i,k}p'_{k,j}}=(x^2)^{i-1}\left(\frac{-xy}{x^2-1}\right)^{j-i}\binom{j-1}{i-1}$$

\noindent we can  conclude that

$$b_{i,j}=\sum_{k=1}^{n}{p_{i,k}a_{k,j}}=\sum_{k=i}^{j}{\left(\frac{xy}{x^2-1}\right)^{k-i}\binom{k-1}{i-1}(x^2)^{k-1}\left(\frac{-xy}{x^2-1}\right)^{j-k}\binom{j-1}{k-1}}$$

$$=\sum_{k=i}^{j}{(-1)^{j-k}\left(\frac{xy}{x^2-1}\right)^{j-i}\binom{j-1}{k-1}\binom{k-1}{i-1}(x^2)^{k-1}}$$

$$=\left(\frac{xy}{x^2-1}\right)^{j-i}\sum_{k=i}^{j}{(-1)^{j-k}\binom{j-1}{i-1}\binom{j-i}{k-i}(x^2)^{k-1}}$$

$$=\left(\frac{xy}{x^2-1}\right)^{j-i}\binom{j-1}{i-1}\sum_{k=i}^{j}{(-1)^{j-k}\binom{j-i}{k-i}(x^2)^{k-1}}$$

$$=\left(\frac{xy}{x^2-1}\right)^{j-i}\binom{j-1}{i-1}\sum_{k=0}^{j-i}{\binom{j-i}{k}(-1)^{j-i-k}(x^2)^{k+i-1}}$$

$$=\left(\frac{xy}{x^2-1}\right)^{j-i}\binom{j-1}{i-1}(x^2)^{i-1}\sum_{k=0}^{j-i}{\binom{j-i}{k}(-1)^{j-i-k}(x^2)^{k}}$$

$$=\left(\frac{xy}{x^2-1}\right)^{j-i}\binom{j-1}{i-1}(x^2)^{i-1}(x^2-1)^{j-i}$$

$$= (xy)^{j-i}\binom{j-1}{i-1}x^{j+i-2}.$$

$$=(y^{j-i})(x^{j+i-2})\binom{j-1}{i-1}$$

\noindent Proving that $b_{i,j}=\rho_{i,j}.$
\end{proof}

This factorization leads to the following observation regarding the eigenvectors of $Q(y,x)$.

\begin{corollary}
If $x \not\in\{1,-1\}$, then the eigenvectors of $Q(y,x)$ are the columns of $P(\frac{yx}{x^2-1}).$ 
\end{corollary}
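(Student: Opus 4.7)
The plan is to observe that the factorization from the preceding theorem is essentially already a diagonalization, once one recognizes that $P_1(-z)$ is the inverse of $P_1(z)$. I would proceed in three steps.

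First, I would establish the one-parameter group law for the generalized Pascal matrices of the first kind: for any $a,b$ in the field,
\[
P_1(a)\,P_1(b)=P_1(a+b).
\]
This reduces to the Vandermonde-type convolution
\[
\sum_{k=i}^{j}\binom{k-1}{i-1}\binom{j-1}{k-1}a^{k-i}b^{j-k}=\binom{j-1}{i-1}(a+b)^{j-i},
\]
which holds over any field because both sides are polynomial identities in $a,b$ obtained from integer binomial identities (using $\binom{j-1}{k-1}\binom{k-1}{i-1}=\binom{j-1}{i-1}\binom{j-i}{k-i}$ and then the binomial theorem). Specializing $b=-a$ gives $P_1(z)P_1(-z)=P_1(0)=I$, so $P_1(-z)=P_1(z)^{-1}$.

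Next, setting $z=\tfrac{yx}{x^{2}-1}$ as in the previous theorem and substituting, the factorization
\[
Q(y,x)=P_1(z)\,D(x^{2})\,P_1(-z)
\]
becomes $Q(y,x)=P_1(z)\,D(x^{2})\,P_1(z)^{-1}$, which is an explicit similarity of $Q(y,x)$ to the diagonal matrix $D(x^{2})$. Reading off the standard consequence of a similarity $A=SDS^{-1}$, with $D$ diagonal, the columns of $S=P_1(z)$ are eigenvectors of $Q(y,x)$; the $i$-th column corresponds to the eigenvalue $(x^{2})^{i-1}$, the $i$-th diagonal entry of $D(x^{2})$. Since $P_1(z)$ is upper triangular with $1$'s on the diagonal, it is invertible over any field, so these $n$ eigenvectors are linearly independent, giving a full eigenbasis.

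The only real subtlety, and the main obstacle, is verifying the group law $P_1(a)P_1(b)=P_1(a+b)$ in positive characteristic, where one cannot casually cite results proven over $\mathbb{Q}$. This is handled by noting that both sides of the binomial identity above have integer coefficients and agree as polynomials in $a,b$ over $\mathbb{Z}$, hence in any field; the hypothesis $x\notin\{1,-1\}$ is used only to guarantee $x^{2}-1\neq 0$ so that $z$ is defined, which is exactly the hypothesis inherited from the theorem.
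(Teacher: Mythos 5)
Your proposal is correct and follows essentially the same route as the paper: both rest on the identification $P_1(-z)=P_1(z)^{-1}$, which turns the factorization $Q(y,x)=P_1(z)D(x^2)P_1(-z)$ into an explicit eigen-decomposition. The only difference is that you prove the inverse relation directly via the group law $P_1(a)P_1(b)=P_1(a+b)$ (valid over any field since it is an integer polynomial identity), whereas the paper cites \cite{CandV} and remarks that the argument there carries over to arbitrary commutative rings with identity.
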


\begin{proof}
It is simple to verify that $P_1(z)^{-1}=P_1(-z)$ (in the case of the real numbers, this has been done explicitly in \cite{CandV}, and in fact, their proof holds for arbitrary commutative rings with identity).  Thus the factorization provided by Theorem 6 is the eigen-decomposition of $Q(y,x)$. 
\end{proof}

Combining this, with the observation that if $x\in\{-1,1\}$ and $y\not= 0$ that $Q(y,x)$ is deficient leads to the following desired theorem.

\begin{theorem}
The Zhang-Liu matrix $Q(y,x)$ is diagonalizable if and only if $x\not\in \{-1,1\}$ or $y=0$.
\end{theorem}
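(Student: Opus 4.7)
The plan is to handle the "if" direction via the factorization already established and the "only if" direction by a direct eigenvalue calculation on the remaining cases.

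For the "if" direction, I would split into two subcases. First, when $x \notin \{-1, 1\}$, Theorem 6 expresses $Q(y,x) = P_1(z) D(x^2) P_1(-z)$ with $z = yx/(x^2-1)$, and Corollary 7 observes that $P_1(-z) = P_1(z)^{-1}$; thus this is literally a similarity of $Q(y,x)$ to the diagonal matrix $D(x^2)$, so $Q(y,x)$ is diagonalizable with no further work. Second, when $y = 0$, the definition of $Q(y,x)$ collapses to the diagonal matrix with entries $x^{2(i-1)}$ (since $0^{j-i} = 0$ whenever $j > i$), which is trivially diagonalizable.

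For the "only if" direction, I need to rule out the case $x \in \{-1,1\}$ together with $y \neq 0$. Here I would observe that $Q(y,x)$ is upper triangular with diagonal entries $\rho_{i,i} = y^{0} \cdot x^{2(i-1)} \cdot 1 = x^{2(i-1)} = 1$ for all $i$, so its sole eigenvalue is $1$ with algebraic multiplicity $n$. If $Q(y,x)$ were diagonalizable, this would force $Q(y,x) = I$. However, the superdiagonal entry $\rho_{1,2} = y \cdot x \cdot 1 = xy$ is nonzero under the assumption $y \neq 0$ and $x \in \{-1,1\}$, contradicting $Q(y,x) = I$.

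I do not anticipate a serious obstacle: the factorization in Theorem 6 does all of the heavy lifting for diagonalizability, and the non-diagonalizability in the exceptional cases reduces to the standard fact that a triangular matrix with a single repeated eigenvalue is diagonalizable only when it equals the corresponding scalar matrix. The only thing to be mildly careful about is invoking the $0^0 = 1$ convention correctly when $y = 0$ so that the diagonal of $Q(0,x)$ is computed as $x^{2(i-1)}$ rather than undefined.
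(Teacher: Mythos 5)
Your proposal is correct and follows essentially the same route as the paper: the paper's (very terse) argument also derives diagonalizability for $x\notin\{-1,1\}$ from the factorization $Q(y,x)=P_1(z)D(x^2)P_1(z)^{-1}$ and disposes of the case $x\in\{-1,1\}$, $y\neq 0$ by noting that $Q(y,x)$ is then defective, which is exactly what your unipotent-but-not-identity argument establishes. Your write-up simply supplies the details (including the $y=0$ case and the $0^0=1$ convention) that the paper leaves implicit.
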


The next corollary  gives us the order of $Q(y,x)$. 

\begin{corollary}
Let $k= |x^2| $,

\begin{enumerate}

\item If $x\not\in \{1,-1\}$ then $|Q(x)|$ is $k$.

\item If $x\in \{1,-1\}$ and the charecteristic of $F$ is $q \not= 0$ then $|Q(x)| = q$.

\item If $x\in \{1,-1\}$ and the charecteristic of $F$ is $0$ then $|Q(x)| = \infty.$

\end{enumerate}

\end{corollary}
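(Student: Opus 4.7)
The plan is to handle case (1) using Theorem 4's factorization, and to handle cases (2)--(3) separately by reducing $Q(y,\pm 1)$ to a Pascal matrix of the first kind and then invoking a Taylor-shift interpretation.

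For case (1), since $x\notin\{1,-1\}$, Theorem 4 gives $Q(y,x)=P_1(z)D(x^2)P_1(-z)$, and by Corollary 5 we have $P_1(-z)=P_1(z)^{-1}$. I would conclude that $Q(y,x)^m=P_1(z)D(x^2)^mP_1(-z)$, so $Q(y,x)^m=I$ iff $D(x^2)^m=I$. The diagonal of $D(x^2)^m$ has entries $(x^2)^{m(i-1)}$; because $n\ge 2$, the $(2,2)$ entry forces $(x^2)^m=1$, and conversely $(x^2)^m=1$ kills every other diagonal entry. Hence $|Q(y,x)|=|x^2|=k$, as claimed.

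For cases (2) and (3), I would first simplify $Q(y,\pm 1)$ directly from the definition. Plugging in $x=1$ gives $Q(y,1)=P_1(y)$ immediately, and plugging in $x=-1$ gives $(-1)^{j+i-2}=(-1)^{j-i}$, so $Q(y,-1)=P_1(-y)$. If $y=0$ the matrix is the identity; assuming $y\ne 0$, it therefore suffices to compute $|P_1(y)|$. A direct $(i,j)$-entry comparison shows $P_1(y)=D(y^{-1})P_1(1)D(y)$, which reduces the problem further to computing $|P_1(1)|$.

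The final (and main) step is to evaluate $|P_1(1)|$ in arbitrary characteristic. I would identify $P_1(1)$ as the matrix, in the basis $\{1,t,\dots,t^{n-1}\}$ of $\mathbb{F}[t]_{<n}$, of the Taylor-shift operator $T\colon f(t)\mapsto f(t+1)$; this is immediate from $(t+1)^{j-1}=\sum_i\binom{j-1}{i-1}t^{i-1}$. Then $P_1(1)^m$ corresponds to $f(t)\mapsto f(t+m\cdot 1_{\mathbb{F}})$, and since $n\ge 2$ allows $f(t)=t$, this operator is the identity iff $m\cdot 1_{\mathbb{F}}=0$. The smallest such positive $m$ is the characteristic $q$ in cases where $q>0$ and does not exist in characteristic zero, yielding (2) and (3). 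The hard part is really only case (2)/(3), since Theorem 4 is unavailable there; the Taylor-shift observation is what makes it routine.
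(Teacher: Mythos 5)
Your proof of case (1) is essentially the paper's: both use the factorization $Q(y,x)=P_1(z)D(x^2)P_1(-z)$ with $P_1(-z)=P_1(z)^{-1}$, though you make explicit the point (needed, and only implicit in the paper) that $n\ge 2$ guarantees the eigenvalue $x^2$ itself occurs on the diagonal, so the order is exactly $|x^2|$ and not a proper divisor. For cases (2) and (3) your route is genuinely different. The paper merely observes $Q(y,1)=P_1(y)$, specializes to $Q(1,1)=P_1(1)$, and cites Deveci--Karaduman for the order of $P_1(1)$ over $\mathbb{Z}/p\mathbb{Z}$; it does not really address general $y$ or characteristic $0$. You instead reduce $P_1(y)$ to $P_1(1)$ by the conjugation $P_1(y)=D(y^{-1})P_1(1)D(y)$ (valid for $y\ne 0$), and then identify $P_1(1)$ with the Taylor shift $f(t)\mapsto f(t+1)$ on $\mathbb{F}[t]_{<n}$, so that $P_1(1)^m$ is the shift by $m\cdot 1_{\mathbb{F}}$ and the order is the additive order of $1_{\mathbb{F}}$. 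This is self-contained, works uniformly in all characteristics including $0$, and handles arbitrary $y\ne 0$, which the paper's citation-based sketch does not. Your aside that $Q(0,\pm 1)=I$ is also worth keeping: it shows that cases (2) and (3) of the corollary as stated actually require $y\ne 0$ (the identity has order $1$, not $q$ or $\infty$), a hypothesis the paper omits from the statement even though its Theorem 3 singles out exactly this case. In short, your argument is correct and, on the $x\in\{1,-1\}$ branch, both more elementary and more complete than the paper's.
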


\begin{proof}
If $x\not\in \{1,-1\}$ then by Theorem 1, $Q(y,x)$ is diagonalizable over $\mathbb{F}$ with all eigenvalues a power of $x^2$ thus $|Q(y,x)|=|x^2|.$ 

To complete proof, note that $Q(y,1)=P(y)$, thus $Q(1,1)=P(1)$ which by  \cite{OandD} is of order $p$ if $\mathbb{F}$ is of characteristic $p$. A similar argument holds if $x=-1$. 
\end{proof}

We remark that this last corollary is a vast generalization of Theorem 2.4 of \cite{Hiller} which says that if  $F$ is a field with $p$ elements and $ |P_2(x))| < p,$ then $|P_2 (x)|   = |x^2| $.

\section*{Acknowledgement}
\noindent The authors would like to thank Ali Unca for his helpful insights and discussion.



\bibliographystyle{amsplain}

\begin{comment}









\documentclass[3p,times]{elsarticle}

\usepackage{ecrc}


\volume{00}

\firstpage{1}

\journalname{Procedia Computer Science}

\runauth{}


\jid{procs}

\jnltitlelogo{Procedia Computer Science}




\usepackage{amssymb}





\usepackage[figuresright]{rotating}




\begin{document}

\begin{frontmatter}



\dochead{}

\title{}


\author{}

\address{}

\begin{abstract}
\end{abstract}

\begin{keyword}


\end{keyword}

\end{frontmatter}


\section{}
\label{}





\bibliographystyle{elsarticle-num}
\bibliography{<your-bib-database>}







\end{document}